\def\bC{\mathbb{C}}
\def\cC{\mathcal{C}}
\def\cI{\mathcal{I}}
\def\cL{\mathcal{L}}
\def\cM{\mathcal{M}}
\def\cO{\mathcal{O}}
\def\bP{\mathbb{P}}
\DeclareMathOperator{\Pic}{Pic}
\DeclareMathOperator{\Sym}{Sym}
\newtheorem{thm}{Theorem}[section]
\newtheorem{lem}[thm]{Lemma}
\newtheorem{conjecture}[thm]{Conjecture}
\newtheorem{cor}[thm]{Corollary}
\newtheorem{prop}[thm]{Proposition}
\newtheorem{question}{Question}
\newtheorem{defn}[thm]{Definition}
\g@addto@macro\bfseries{\boldmath} 
\begin{document}

\title{An asymptotic Alexander-Hirschowitz theorem for surfaces}

\author{Carl Lian}
\address{Institut f\"{u}r Mathematik, Humboldt-Universit\"{a}t zu Berlin, 12489 Berlin, Germany}
\email{liancarl@hu-berlin.de}
\urladdr{\url{https://sites.google.com/view/carllian}}

\date{\today}

\begin{abstract}
Let $X$ be a smooth projective surface over $\mathbb{C}$ and let $\mathcal{L}$ be an ample line bundle on $X$. In this note, we show that, for all sufficiently large $d$, any number of general double points on $X$ imposes the expected number of conditions on the linear system $|\mathcal{L}^{\otimes d}|$. Equivalently, the space of $d$-plane sections of $X$ singular at any number of general points has the expected dimension. We conjecture that the same holds for $X$ of arbitrary dimension.
\end{abstract}

\maketitle


\section{Introduction}\label{intro}

\textcolor{red}{It was pointed out to the author soon after posting that a result subsuming both the main theorem and conjecture of this paper were proven in \cite{ah00}. This article is no longer intended for publication.}

We work throughout over $\bC$.

The Alexander-Hirschowitz theorem \cite{ah,bo} considers linear systems of degree $d$ hypersurfaces on $\bP^n$ constrained to be singular at a general collection of $k$ points. A na\"{i}ve parameter count predicts the dimension of this linear system to be
\begin{equation*}
\max\left(0,\binom{n+d}{d}-k(n+1)\right)-1.
\end{equation*}
Indeed, this holds except in a short list of exceptional cases.

\begin{thm}[Alexander-Hirschowitz]\label{AH_thm}
The linear system of degree $d$ hypersurfaces of $\bP^n$ singular at a general collection of $k$ points has the expected dimension, except when:
\begin{itemize}
\item $d=2$ and $2\le k\le n$,
\item $(n,d,k)=(2,4,5)$,
\item $(n,d,k)=(3,4,9)$,
\item $(n,d,k)=(4,3,7)$,
\item $(n,d,k)=(4,4,14)$.
\end{itemize}
\end{thm}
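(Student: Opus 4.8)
The plan is to recast the statement as a cohomological assertion about fat points and then prove it by a carefully engineered degeneration. A degree $d$ hypersurface of $\bP^n$ singular at a point $p$ is exactly a nonzero section of $\cO_{\bP^n}(d)$ lying in the square of the maximal ideal at $p$; hence, writing $Z\subset\bP^n$ for $k$ general points and $2Z$ for the associated double-point scheme (the first infinitesimal neighborhood, of length $k(n+1)$), the linear system in question is $\bP H^0(\bP^n,\cI_{2Z}(d))$. Since $h^0(\cO_{\bP^n}(d))=\binom{n+d}{d}$, having the expected dimension is equivalent to the statement that $2Z$ imposes independent conditions on $|\cO_{\bP^n}(d)|$, i.e. that $h^0(\cI_{2Z}(d))=\max\!\left(0,\binom{n+d}{d}-k(n+1)\right)$, equivalently that $h^0(\cI_{2Z}(d))\cdot h^1(\cI_{2Z}(d))=0$. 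By Terracini's lemma this is the same as asking that the $k$-th secant variety of the $d$-th Veronese embedding of $\bP^n$ (the closure of the union of the $(k{-}1)$-planes spanned by $k$ general points) have the expected dimension $\min\!\left(\binom{n+d}{d}-1,\,k(n+1)-1\right)$, so that the list of exceptions is exactly the list of defective Veronese varieties.

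Next I would clear away the base cases directly. The case $d=1$ is immediate. The case $d=2$ is linear algebra: a quadric singular at $p_1,\dots,p_k$ corresponds to a symmetric bilinear form whose kernel contains the $k$ general vectors lifting the $p_i$, and a dimension count over the space of symmetric forms produces precisely the excess when $2\le k\le n$; similarly the four sporadic triples are handled by exhibiting the unexpected hypersurface (for $(2,4,5)$ the double of the conic through $5$ general points, for $(3,4,9)$ the double quadric through $9$ general points, and for $(4,3,7)$ and $(4,4,14)$ the corresponding degenerate secant configurations), together with a matching upper bound. The small values of $n$ (say $n\le 4$) and small $d$ are finite in number once $k$ is bounded by the trivial range, and can be checked by hand or machine.

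The core is an induction on the pair $(n,d)$ via the m\'ethode d'Horace. Fix a hyperplane $H\cong\bP^{n-1}$ and specialize $a$ of the $k$ points onto $H$, leaving $b=k-a$ general; for the resulting scheme $W$ one has the restriction exact sequence
\begin{equation*}
0\longrightarrow \cI_{\mathrm{Res}_H W}(d-1)\longrightarrow \cI_W(d)\longrightarrow \cI_{W\cap H}(d)|_H\longrightarrow 0,
\end{equation*}
where the first map is multiplication by the equation of $H$, the residual scheme $\mathrm{Res}_H W$ lives on $\bP^n$ in degree $d-1$, and the trace $W\cap H$ lives on $\bP^{n-1}$ in degree $d$. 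If $a$ can be chosen so that both outer terms have vanishing $h^1$ by the inductive hypothesis, then $h^1(\cI_W(d))=0$, and semicontinuity lets one conclude for the general configuration. The real obstacle — and the reason the theorem is genuinely hard rather than a routine induction — is that for infinitely many $(n,d,k)$ no integer $a$ balances the two subproblems, because of a parity/dimension mismatch between a double point ($n+1$ conditions on $\bP^n$) and its trace and residue on $H$. Resolving this is the point of Horace diff\'erentielle: one degenerates some double points toward $H$ through flat families whose limits are not honest double points but hybrid length-$n$ schemes (double in the directions tangent to $H$, reduced transversally, together with "tangent" variants thereof), chosen so that their traces and residues make the bookkeeping close. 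The bulk of the work is in constructing these differential specializations, verifying that the limiting schemes remain general enough within $H$ for the inductive hypothesis to apply, and checking that the induction correctly detours around each of the sporadic exceptions; once the right degenerations are in hand, every individual step reduces to a cohomology computation on a projective space of strictly smaller $n$ or $d$.
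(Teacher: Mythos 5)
The paper does not actually prove this statement: it is quoted as Theorem \ref{AH_thm} with references to \cite{ah} and \cite{bo}, and only its qualitative consequence (for fixed $n$ and all large $d$ there are no defective cases) is used later. Your outline correctly reproduces the strategy of those references: the translation of the problem into $h^0(\cI_{2Z}(d))\cdot h^1(\cI_{2Z}(d))=0$ for the double-point scheme, the equivalence via Terracini with non-defectivity of secant varieties of Veronese embeddings, the Horace exact sequence with respect to a hyperplane $H$, and the need for the differential Horace method to resolve the arithmetic mismatch between the $n+1$ conditions imposed by a double point and the possible trace/residual splittings. This is exactly the route the paper's \S\ref{higher_dim_section} describes as ``the usual strategy,'' so the approach is the right one.

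As a proof, however, it has a genuine gap: essentially everything that makes the theorem hard is named rather than carried out. The differential Horace lemma --- that one may degenerate a double point to a hybrid ``vertically graded'' scheme along $H$, that the resulting family is flat, and that semicontinuity then runs in the direction needed to conclude for the general configuration --- is the key technical input and is only asserted; likewise the verification that the specialized subschemes of $H$ remain general enough for the inductive hypothesis to apply, which is precisely where the four sporadic exceptions force the induction to detour, is deferred. More concretely, your claim that the base cases are ``finite in number'' and checkable by hand or machine is not correct as stated: for the induction on $d$ to close one needs the cubic case $d=3$ for \emph{every} $n$, an infinite family which in \cite{ah} and \cite{bo} requires its own separate argument (an induction on $n$ for cubics, resp.\ a curvilinear-scheme lemma), not a finite verification. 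As written the proposal is an accurate map of the published proof, not a proof; given that the paper itself treats the statement as a black box, the honest options are either to cite it as the paper does or to supply the differential Horace lemma and the cubic base case in full.
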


In particular, when $n$ is fixed and $d$ is sufficiently large, any number of general double points of $\bP^n$ impose the expected number of conditions on degree $d$ hypersurfaces. In this note, we consider this phenomenon on arbitrary smooth projective varieties.

\begin{defn}
Let $X$ be a smooth projective variety and let $\cL$ be a line bundle on $X$. We say that $(X,\cL)$ satisfies the \textbf{Alexander-Hirschowitz property (AH)} if, for all positive integers $k$, a general collection of double points $p_1,\ldots,p_k\in X$ imposes independent conditions on the linear system $|\cL|$. Equivalently, the linear system of divisors on $X$ singular at all of the $p_i$ has the expected dimension
\begin{equation*}
\max(0,\chi(X,\cL^{\otimes d})-k(\dim(X)+1))-1.
\end{equation*}
\end{defn}

When $\cL$ is ample, $\chi(X,\cL^{\otimes d})=h^0(X,\cL^{\otimes d})$ for all sufficiently large $d$. Moreover, for a \textit{fixed} integer $k$, Serre vanishing guarantees that the linear system of $d$-plane sections singular at $k$ points has expected dimension for all $d>d_0$ sufficiently large. However, for AH to hold, we require such a $d_0$ to exist \textit{independently} of $k$.

Our main result is:

\begin{thm}\label{thm_surfaces}
Suppose that $X$ is a surface and $\cL$ is ample. Then, $(X,\cL)$ satisfies AH.
\end{thm}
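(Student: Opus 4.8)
The strategy is the classical method of Horace (residuation along auxiliary curves), preceded by a reduction to two ``critical'' instances.

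\smallskip

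\noindent\emph{Reduction.} Fix $d$ large enough that $h^0(\cL^{\otimes d})=\chi(\cL^{\otimes d})=:N$ and $h^1(\cL^{\otimes d})=h^2(\cL^{\otimes d})=0$, and let $Z_k$ denote a general union of $k$ points. Imposing a double point at $p$ imposes $3$ conditions, so the assertion for a given $k$ is that the restriction map $\rho_k\colon H^0(\cL^{\otimes d})\to H^0(\cO_{2Z_k}\otimes\cL^{\otimes d})$ (whose target has dimension $3k$) has maximal rank. Now surjectivity of $\rho_k$ is inherited by passing to a quotient of the target (forgetting a point) and injectivity by passing to an enlargement (adding a point), and a general configuration of $k+1$ points restricts to a general configuration of $k$ points; hence surjectivity of $\rho_{k_0}$ for $k_0=\lfloor N/3\rfloor$ forces surjectivity of $\rho_k$ for all $k\le k_0$, and injectivity of $\rho_{k_0+1}$ forces injectivity of $\rho_k$ for all $k\ge k_0+1$, which together give AH for every $k$. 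So it suffices to prove, for general $Z$: (a) $h^0(\cL^{\otimes d}\otimes\cI_{2Z_{k_0}})=N-3k_0\in\{0,1,2\}$, and (b) $h^0(\cL^{\otimes d}\otimes\cI_{2Z_{k_0+1}})=0$.

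\smallskip

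\noindent\emph{Degeneration.} Fix, independently of $d$ and $k$, an integer $e\ge 1$ with $\cL^{\otimes e}$ very ample and a smooth irreducible curve $C\in|\cL^{\otimes e}|$ of genus $g$; set $\delta:=h^0(\cL^{\otimes d}|_C)=de(\cL^2)-g+1$. Specialize $j$ of the points of $Z$ to general points of $C$ and the remaining $k-j$ to general points of $X\smallsetminus C$, obtaining a scheme $W$. The residual exact sequence
\[
0\to\cI_{\operatorname{Res}_C W}\otimes\cL^{\otimes(d-e)}\to\cI_W\otimes\cL^{\otimes d}\to\cI_{W\cap C,\,C}\otimes\cL^{\otimes d}|_C\to 0
\]
has trace $W\cap C=2\sum_{i\le j}(p_i)$, a degree-$2j$ divisor on $C$, and residual $\operatorname{Res}_C W$ equal to a general union of $j$ reduced points on $C$ and $k-j$ double points off $C$. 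By upper semicontinuity of cohomology it suffices to exhibit one admissible $j$ for which the two outer terms have the expected $h^0$ and the left-hand term has $h^1=0$; the conclusion for $W$, hence for the generic member of the family, follows.

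\smallskip

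\noindent\emph{The two pieces.} On the curve, $\cL^{\otimes d}|_C(-2\sum(p_i))$ has degree $\delta+g-1-2j$, and since a general union of double points imposes independent conditions on any line bundle on a smooth curve (classical; here the bundle is very ample for $d\gg0$, so elementary), its $h^0$ equals $\max(0,\delta-2j)$, with $h^1=0$ for a suitable $j$. A direct computation with the sequence above then shows that the dimension count balances exactly when $2j$ is close to $\delta$: precisely, any even $2j$ with $\delta-(N-3k_0)\le 2j\le\delta$ works for (a) and any even $2j$ with $\delta\le 2j\le 3(k_0+1)-N+\delta$ works for (b), and such a value always exists except in the single borderline situation of (a) in which $3\mid N$ and $\delta$ is odd (which one handles separately, e.g.\ by working instead with $k_0-1$ double points plus one extra general double point, or by the differential Horace lemma). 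In every case we are reduced to showing that the residual scheme, a general union of some reduced and some double points, imposes independent conditions on $|\cL^{\otimes(d-e)}|$.

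\smallskip

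\noindent\emph{Closing the induction; the main obstacle.} The residual is a problem of the same kind at the lower degree $d-e$, once one enlarges the class of schemes considered to mixed unions of reduced and double points; a further Horace step, degenerating all the reduced points onto a second curve $C'\in|\cL^{\otimes e'}|$ with $h^0(\cL^{\otimes(d-e)}|_{C'})$ at least the number of reduced points, returns one to a pure double-point problem at degree $d-e-e'$. One then proceeds by downward induction on $d$, the base cases being the trivial ones in which $|\cL^{\otimes d'}|$ is empty (for $0<d'$ less than the least power of $\cL$ with a section) or $d'\le 0$. The main obstacle is the bookkeeping near the bottom of the induction: since the dimension count above is a conservation law, each Horace step leaves the residual problem \emph{critical} rather than making it strictly easier, so the induction must be run all the way down through small degrees, at which genuinely exceptional configurations exist (cf.\ Theorem~\ref{AH_thm}); one must check that the particular residual schemes produced never coincide with such an exception. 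This is exactly where the hypothesis $d\gg0$ enters: it leaves enough latitude in the choices of $e$, $e'$ and $j$ to steer every degeneration past the exceptional loci.
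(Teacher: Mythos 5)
Your proposal takes the Horace/degeneration route, which is genuinely different from the paper's argument (the paper uses the Terracini lemmas to produce a multiple base curve $C$ through the $k$ points, and then bounds $h^0$ of the class of $C$ via Kawamata--Viehweg vanishing and the Hodge index theorem to contradict a dimension count). Unfortunately, as written your argument has a real gap exactly at the step you yourself flag as ``the main obstacle.'' The Horace step is a conservation law: each residuation leaves a problem that is again critical, at degree $d-e$, then $d-2e$, and so on, so the number of steps grows linearly in $d$ and the induction must be run all the way down to small twists of $\cL$, where the needed maximal-rank statements can genuinely fail (cf.\ the exceptions in Theorem~\ref{AH_thm}, which for $\bP^2$ are ruled out by explicit computation). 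For an arbitrary polarized surface $(X,\cL)$ there is no such computation available, and the assertion that ``$d\gg0$ leaves enough latitude in the choices of $e$, $e'$ and $j$ to steer every degeneration past the exceptional loci'' is precisely the claim that needs proof: taking $d$ large does not shrink the set of low-degree residual problems you must eventually solve, it only lengthens the chain leading to them. This is why the paper avoids the degeneration strategy (it raises exactly this base-case issue) and argues instead by bounding the multiple base curve. A Horace-type proof does exist --- it is essentially the content of \cite{ah00}, via the differential Horace lemma --- but that requires substantial machinery your sketch does not supply.

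There is also a structural error in the ``closing the induction'' step: after the first residuation, the $j$ reduced points of the residual scheme are not general points of $X$ but are constrained to lie on $C$ (they are the supports of the double points you specialized there). You therefore cannot ``degenerate all the reduced points onto a second curve $C'$''; the standard fix is to reuse the same curve $C$ at the next step, so that those reduced points contribute only to the trace and disappear from the new residual, but then the numerology of your inequalities for admissible $j$ must be redone for mixed schemes on a fixed $C$, and the parity/borderline analysis changes. Relatedly, the parenthetical claim that general double points impose independent conditions on \emph{any} line bundle on a smooth curve is false in general (the paper's own Proposition on curves shows failures can occur for $k<g$); it holds in your application only because the relevant degrees are large, and that should be said. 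None of these smaller points is fatal, but the base-case problem in the first paragraph is.
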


The natural conjecture is then:

\begin{conjecture}\label{main_conj}
Suppose that $X$ is a smooth projective variety of any dimension and $\cL$ is ample. Then, $(X,\cL)$ satisfies AH.
\end{conjecture}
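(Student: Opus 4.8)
The plan is to prove Conjecture~\ref{main_conj} by induction on $n=\dim X$, using Theorem~\ref{thm_surfaces} (together with the classical curve case) as the base and a version of the method of Horace for the inductive step. First I would record the standard cohomological reformulation: writing $2Z\subset X$ for the union of the first-order neighborhoods $\Spec(\cO_{X,p_i}/\mathfrak{m}_{p_i}^2)$, the double points impose independent conditions on $|\cL^{\otimes d}|$ exactly when
\begin{equation*}
h^0(X,\cL^{\otimes d}\otimes\cI_{2Z})=\max\bigl(0,\,\chi(X,\cL^{\otimes d})-k(n+1)\bigr),
\end{equation*}
and for $d\gg 0$, Serre vanishing makes this equivalent to maximal rank of the $1$-jet evaluation $H^0(\cL^{\otimes d})\to\bigoplus_i J^1(\cL^{\otimes d})_{p_i}$. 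By upper-semicontinuity it suffices to exhibit a single configuration of maximal rank, and by a monotonicity observation — restricting the target drops surjectivity to fewer points, while enlarging $2Z$ preserves injectivity — the whole family of statements over $k$ collapses to the two critical cardinalities $k^\ast$ and $k^\ast+1$ straddling $k(n+1)=\chi(\cL^{\otimes d})$: surjectivity just below the threshold and injectivity just above. This is the crucial point at which the uniformity in $k$ must be secured.

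For the inductive step I would fix $m$ with $\cL^{\otimes m}$ very ample and choose, by Bertini, a smooth $D\in|\cL^{\otimes m}|$; then $D$ is a smooth projective $(n-1)$-fold with $\cL|_D$ ample, so by the inductive hypothesis $(D,\cL|_D)$ satisfies AH. Specializing $a$ of the $k$ double points onto $D$ and keeping the remaining $b=k-a$ general off $D$, the residual/trace sequence
\begin{equation*}
0\to\cL^{\otimes(d-m)}\otimes\cI_{\mathrm{Res}_D(2Z)}\to\cL^{\otimes d}\otimes\cI_{2Z}\to\cL^{\otimes d}|_D\otimes\cI_{\mathrm{Tr}_D(2Z),\,D}\to 0
\end{equation*}
reduces the problem to two pieces. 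The trace consists of $a$ double points of $D$ in $|\cL^{\otimes d}|_D|$ and is governed by AH on $D$; the residual consists of $a$ simple points together with $b$ double points of $X$ in $|\cL^{\otimes(d-m)}|$. Choosing $a$ to bring the trace system exactly to its own critical threshold, the inductive hypothesis handles the trace, and a long-exact-sequence chase converts maximal rank of trace and residual into maximal rank on $X$.

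The residual is the source of the two genuine difficulties. First, it is a \emph{mixed} interpolation problem — simple points appear alongside double points — so the inductive statement must be strengthened to allow general points of multiplicity $\le 2$; the simple points are individually harmless, but their interaction with the double points must be tracked, and the base case (Theorem~\ref{thm_surfaces}) correspondingly upgraded to this mixed form. Second, the residual lives at degree $d-m$ on $X$ itself, so naively one is forced into a descending recursion on the degree that would eventually reach the small, genuinely exceptional values listed in Theorem~\ref{AH_thm}.

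The way out, and the step I expect to be the main obstacle, is to run the differential Horace method: rather than specializing whole double points onto $D$, one splits their jets so that the trace and residual contributions can be tuned independently, allowing both systems to be held at their respective critical thresholds simultaneously. The asymptotic hypothesis $d\gg 0$ is exactly what makes this balancing possible — the critical counts are large, so the integer $a$ can be chosen to realize the balance, and the recursion can be arranged to terminate at manifestly non-special configurations well before descending into the exceptional range. I expect the delicate bookkeeping of this differential degeneration, carried out uniformly in $k$ and compatibly with the mixed-multiplicity induction, to be where the real work lies; the geometric payoff is the Terracini-type statement that all secant varieties of the image $\phi_{|\cL^{\otimes d}|}(X)$ are nondefective, i.e.\ that the space of singular $d$-plane sections has the expected dimension.
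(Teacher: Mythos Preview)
The paper does not prove Conjecture~\ref{main_conj}; it is stated as an open conjecture, and \S\ref{higher_dim_section} explicitly argues that the Horace-type degeneration strategy you outline ``will fail'' for arbitrary $X$, precisely because one cannot establish base cases in $d$ by direct computation as on $\bP^n$. So there is no proof in the paper to compare your proposal to, and your plan runs straight into the obstacle the paper singles out.

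The concrete gap is the residual step. After specializing $a$ double points onto a smooth $D\in|\cL^{\otimes m}|$, the trace problem lives on $D$ and is indeed handled by the inductive hypothesis on dimension. But the residual problem lives on $X$ itself, in degree $d-m$, with a mixture of simple and double points. This is \emph{not} covered by an induction on $\dim X$: it is the same $n$-dimensional variety. To handle it you must either invoke (mixed) AH for $(X,\cL^{\otimes(d-m)})$ --- which is what you are trying to prove --- or iterate the specialization, descending in $d$ on the fixed $X$. Any such descent eventually bottoms out at small $d$, and for a general $(X,\cL)$ there is no analogue of the explicit low-degree verification that anchors the recursion on $\bP^n$. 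Your appeal to the differential Horace method does not resolve this: differential Horace refines how jets are apportioned between trace and residual, allowing the two thresholds to be balanced, but it does not eliminate the degree descent on $X$ or supply the missing base case. The sentence ``the recursion can be arranged to terminate at manifestly non-special configurations well before descending into the exceptional range'' is exactly the unproven assertion --- for arbitrary $(X,\cL)$ the exceptional range is not known a priori, and nothing in the outline prevents the descent from reaching it. As written, the proposal is a plan whose decisive step is deferred.

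For completeness: the paper notes in its opening remark that the conjecture was in fact established in \cite{ah00}.
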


To prove Theorem \ref{thm_surfaces}, we follow the same strategy for $\bP^2$ as explained in \cite{bo}. Namely, the Terracini Lemmas guarantee the existence of a multiple curve passing through the $p_k$ which must appear in the base locus of a linear system failing AH. One can immediately conclude when $X=\bP^2$, but in the case of arbitrary surfaces, we appeal to the Kawamata-Viehweg and Hodge index theorems to bound the dimension of the residual linear system in order to derive a contradiction.

It seems that the usual degeneration strategy for establishing the classical Alexander-Hirschowitz theorem will fail to prove Conjecture \ref{main_conj}, as one would need a way to establish base cases in $d$ for arbitrary $X$, which is achieved by explicit computation in the case of projective spaces. One could instead attempt to adapt the method in the case $\dim(X)=2$ for arbitrary $X$, leading to subtle geometric questions that may be of independent interest. We give a brief outlook on the higher-dimensional case in \S\ref{higher_dim_section}.

The notorious SHGH conjecture concerns linear systems of curves in $\bP^2$ constrained to pass through general points with higher multiplicities, see \cite{shgh} for a survey. One could similarly consider asymptotic versions of SHGH on arbitrary surfaces, but we do not pursue this direction here. 

Finally, we remark here that we also have the easier result that AH holds for curves.

\begin{prop}
Suppose that $X$ is a curve. Then, Conjecture \ref{main_conj} holds.
\end{prop}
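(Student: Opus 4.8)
The plan is to reduce the statement, for a curve, to classical facts about Jacobians and Brill--Noether loci. Write $\delta = \deg\cL$ and $g$ for the genus of $X$. Since $\dim X = 1$, a double point at $p\in X$ is the length-two subscheme $2p$, so for distinct $p_1,\dots,p_k$ the linear system of divisors in $|\cL^{\otimes d}|$ singular at all the $p_i$ is $\bP H^0\big(X,\cL^{\otimes d}(-2p_1-\cdots-2p_k)\big)$. Set $D = D_{d,k} := \cL^{\otimes d}(-2p_1-\cdots-2p_k)$, a line bundle of degree $d\delta - 2k$ with $\chi(D) = \chi(X,\cL^{\otimes d}) - 2k$ by Riemann--Roch. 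From the exact sequence
\[
0 \to D \to \cL^{\otimes d} \to \bigoplus_{i=1}^k \cO_{2p_i} \to 0
\]
together with the vanishing $h^1(X,\cL^{\otimes d}) = 0$ (valid once $d\delta > 2g-2$), the double points $p_1,\dots,p_k$ impose independent conditions on $|\cL^{\otimes d}|$ exactly when $h^0(D) = \max(0,\chi(D))$; equivalently, $h^1(D) = 0$ when $\chi(D)\ge 0$ and $h^0(D) = 0$ when $\chi(D)\le 0$. It therefore suffices to find $d_0$, depending only on $(X,\cL)$ and not on $k$, so that these vanishings hold for all $d\ge d_0$, all $k\ge 1$, and general $p_1,\dots,p_k\in X$.

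Next I would unify the two vanishings via Serre duality: $h^1(D) = h^0(K_X\otimes D^{-1})$, and since $\deg D + \deg(K_X\otimes D^{-1}) = 2g-2$, the bundle $K_X\otimes D^{-1}$ has degree $\le g-1$ precisely when $\chi(D)\ge 0$, while $D$ has degree $\le g-1$ precisely when $\chi(D)\le 0$. If the relevant bundle has negative degree its vanishing is automatic, so the only content is: \emph{for $0\le e\le g-1$, a line bundle of degree $e$ of the form $M_0\otimes\cO_X(\pm(2p_1+\cdots+2p_k))$, with $M_0$ a fixed line bundle, has no nonzero sections for general $p_i$.} The crucial bookkeeping point --- and the one place where uniformity in $k$ enters --- is that in this range one always has $2k\ge d\delta - 2g + 2$ (check the cases $\chi(D)\le 0$ and $\chi(D)\ge 0$ separately). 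Hence if $d_0$ is chosen with $d_0\delta\ge 4g-2$ (and $d_0\ge 1$), then $d\ge d_0$ forces $k\ge g$ whenever we are in the intermediate range.

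Finally, for $k\ge g$ the claim follows from two classical facts: the image of the Abel--Jacobi map $X^{(k)}\to\Pic^k(X)$ is all of $W_k(X) = \Pic^k(X)$ (Jacobi inversion), and multiplication by $2$ on the abelian variety $\Pic^0(X)$ is surjective. Composing and translating by $[M_0^{\pm 1}]$, the map $\mu\colon X^k\to\Pic^e(X)$, $(p_i)\mapsto [M_0\otimes\cO_X(\pm 2\sum_i p_i)]$, is surjective. The locus $W_e(X)\subset\Pic^e(X)$ of effective divisor classes is closed and, since $\dim W_e(X)\le e < g = \dim\Pic^e(X)$, proper; hence $\mu^{-1}(W_e(X))$ is a proper closed subset of $X^k$, and for $(p_i)$ outside it the bundle $M_0\otimes\cO_X(\pm 2\sum_i p_i)$ is non-effective, as required. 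Assembling the two cases from the second paragraph yields AH for $(X,\cL)$ with the explicit uniform bound on $d_0$ above.

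I do not expect a genuine obstacle: the only subtlety is ensuring $d_0$ is independent of $k$, which is handled by the elementary inequality forcing $k\ge g$ in the intermediate range, while all the geometric input (Jacobi inversion, surjectivity of multiplication by $2$, the dimension of $W_e(X)$) is standard. In the extreme case $X = \bP^1$ the result is immediate, since $g = 0$ leaves no intermediate range and every line bundle of non-negative degree on $\bP^1$ is non-special.
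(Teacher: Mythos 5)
Your proof is correct and follows essentially the same route as the paper: dispose of the ranges where $\deg D<0$ or $\deg D>2g-2$ trivially, observe that the remaining intermediate range forces $k\ge g$ for $d$ large, and then use surjectivity of the map $(p_i)\mapsto[\cL^{\otimes d}(-2\sum p_i)]$ onto the Picard variety together with genericity of non-special (non-effective) line bundles. Your version is somewhat more carefully written --- the Serre-duality unification, the explicit bound $d_0\delta\ge 4g-2$, and the $W_e$ dimension count make precise what the paper leaves implicit --- but the underlying argument is the same.
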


\begin{proof}
Let $g$ be the genus of $X$. We may assume that $d>2g-2$, so that $h^0(X,\cL)=d-g+1$. The linear system $|\cL(-2(p_1+\cdots+p_k))|$ will clearly have expected dimension if $d<2k$ or $2k<d-(2g-2)$. Moreover, when $k\ge g$, any line bundle of degree $d-2k$ on $X$ is of the form $\cL(-2(p_1+\cdots+p_k))$, and the generic such line bundle has vanishing $H^1$ if $d-2k\ge0$, so in this case $|\cL(-2(p_1+\cdots+p_k))|$ also has expected dimension. Thus, a collection of $k$ general double points can only fail to impose independent conditions on $|\cL^{\otimes d}|$ for $k<g$, and for such $k$, AH can only fail for finitely many $d$.
\end{proof}

\subsection{Acknowledgments}

I thank Amol Aggarwal, Joe Harris, Dennis Tseng, and especially Johan de Jong for discussions related to this topic. This work was completed with the support of NSF Graduate Research and Postdoctoral Fellowships.

\section{Proof of Theorem \ref{thm_surfaces}}

We first recall the Terracini Lemmas, which are also used to prove the Alexander-Hirschowitz theorem for $\bP^2$ and date to \cite{terracini}. The proofs for general arbitrary $X$ are essentially identical to those given in \cite{bo}; we include them in full for the reader's convenience.

\begin{lem}[First Terracini Lemma]
Let $X\subset\bP^N$ be a non-degenerate closed subvariety, and let $x_1,\ldots,x_k$ be general points such that the linear span $\langle x_1,\ldots,x_k\rangle$ has dimension $k-1$ (in particular, $k\le N+1$). Let $z\in \langle x_1,\ldots,x_k\rangle$ be a general point. Then,
\begin{equation*}
T_{z}\sigma_k(X)=\langle T_{x_1}X,\ldots,T_{x_k}X\rangle,
\end{equation*}
where $\sigma_k(X)$ denotes the $k$-secant variety of $X$.
\end{lem}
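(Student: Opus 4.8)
The plan is to prove the First Terracini Lemma by a direct tangent-space computation, parametrizing the secant variety $\sigma_k(X)$ as the closure of the image of a rational map and differentiating. Concretely, consider the incidence variety $I \subset X^k \times \bP^N$ consisting of tuples $(x_1, \ldots, x_k, z)$ with $z \in \langle x_1, \ldots, x_k \rangle$; then $\sigma_k(X)$ is the closure of the image of the projection $\pi \colon I \to \bP^N$. Near a general point, $I$ is fibered over an open subset of $X^k$ by the $(k-1)$-planes $\langle x_1, \ldots, x_k \rangle$, so $\dim I = k\dim X + (k-1)$ and $I$ is smooth there; the map $\pi$ is dominant onto $\sigma_k(X)$. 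The image of $d\pi$ at a general point of $I$ then computes $T_z \sigma_k(X)$, provided $z$ is a smooth point of $\sigma_k(X)$ and $\pi$ is smooth at the chosen point of $I$, which holds generically by generic smoothness in characteristic zero.

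**First I would** set up affine coordinates. Write a general point of $\sigma_k(X)$ as $z = \sum_{i=1}^k \lambda_i x_i$ for scalars $\lambda_i$ (working in a suitable affine cone), so that a curve in $I$ through $(x_1,\ldots,x_k,z)$ has the form $t \mapsto \left(x_1(t), \ldots, x_k(t), \sum_i \lambda_i(t) x_i(t)\right)$ with $x_i(0) = x_i$ and $\lambda_i(0) = \lambda_i$. Differentiating at $t = 0$ gives
\begin{equation*}
\frac{d}{dt}\Big|_{t=0} \sum_i \lambda_i(t)\, x_i(t) = \sum_i \lambda_i\, x_i'(0) + \sum_i \lambda_i'(0)\, x_i.
\end{equation*}
As the $x_i(t)$ range over all curves in $X$, the vectors $x_i'(0)$ range over $T_{x_i} X$ (the affine tangent space), and the $\lambda_i'(0)$ range over all scalars. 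Hence the image of $d\pi$ is exactly the span of $\bigcup_i T_{x_i} X$ together with the points $x_i$ themselves; but since $X$ is non-degenerate and we may assume each affine tangent space $T_{x_i}X$ contains $x_i$ (the cone is a cone), this span is precisely $\langle T_{x_1} X, \ldots, T_{x_k} X\rangle$. Passing back to projective space, $T_z \sigma_k(X) = \langle T_{x_1} X, \ldots, T_{x_k} X\rangle$.

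**The remaining point** is to justify that this computation actually yields the full tangent space and not just a subspace: we need $z$ to be a smooth point of $\sigma_k(X)$ and $(x_1, \ldots, x_k, z)$ a point where $d\pi$ is surjective onto $T_z\sigma_k(X)$. Since we are in characteristic zero and $\pi$ is dominant onto $\sigma_k(X)$, generic smoothness guarantees that for $z$ general (equivalently, for $x_1, \ldots, x_k$ and the weights general), $\pi$ is smooth at a general point of the fiber, so $\operatorname{im}(d\pi) = T_z \sigma_k(X)$; the genericity hypotheses on the $x_i$ and on $z \in \langle x_1, \ldots, x_k\rangle$ are exactly what is needed. The condition $\dim\langle x_1, \ldots, x_k\rangle = k-1$ ensures the weights $\lambda_i$ are well-defined (the $x_i$ are in linearly general position in the cone), so no degeneration of the parametrization occurs.

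**The main obstacle** I anticipate is purely bookkeeping rather than conceptual: one must be careful about the distinction between the affine cone and the projective picture, ensuring that "$T_{x_i}X$" consistently denotes the embedded projective tangent space (a linear subspace of $\bP^N$ containing $x_i$), and that the scaling ambiguity in writing $z = \sum \lambda_i x_i$ does not introduce spurious directions. Once the affine-versus-projective translation is handled cleanly, the proof is a one-line derivative computation plus an invocation of generic smoothness.
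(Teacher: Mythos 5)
Your proposal is correct and follows essentially the same route as the paper: both parametrize $\sigma_k(X)$ via the incidence variety in $X^k\times\bP^N$, identify the image of the differential of the projection as $\langle T_{x_1}X,\ldots,T_{x_k}X\rangle$, and invoke generic smoothness in characteristic zero to conclude that this image is all of $T_z\sigma_k(X)$. Your explicit affine-cone differentiation of $z=\sum_i\lambda_i(t)x_i(t)$ is just a coordinate version of the paper's description of the tangent space of the incidence variety as $T_{x_1}X\oplus\cdots\oplus T_{x_k}X\oplus T_z\langle x_1,\ldots,x_k\rangle$.
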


\begin{proof}
We recall the construction of $\sigma_k(X)$: let $Y\subset X^k\times\bP^N$ be the locus of $(x_1,\ldots,x_k,z)$ such that $\langle x_1,\ldots,x_k\rangle$ has dimension $k-1$ and $z\in\langle x_1,\ldots,x_k\rangle$. Then, $\sigma_k(X)$ is the closure of the image of $Y$ upon projection to $\bP^N$. The composite $Y\subset X^k\times\bP^N\to X^k$ is a $(k-1)$-plane bundle over the open locus in $X^k$ of points in linearly general position, so $Y$ is integral, and the map $\varphi:Y\to\sigma_k(X)$ is generically smooth. In particular, at a general point $(x_1,\ldots,x_k,z)\in Y$, $\varphi$ is surjective on tangent spaces.

Using the description of $Y$ as a $(k-1)$-plane bundle over $X^k$ in a neighborhood of $(x_1,\ldots,x_k)$, we have
\begin{equation*}
T_{(x_1,\ldots,x_k,z)}Y=T_{x_1}X\oplus\cdots T_{x_k}X\oplus T_{z}\langle x_1,\ldots,x_k\rangle,
\end{equation*}
the image of which in $T_{z}\sigma_k(X)$ is exactly $\langle T_{x_1}X,\ldots,T_{x_k}X\rangle$. By assumption, $(T\varphi)_{(x_1,\ldots,x_k,z)}$ is surjective, so the proof is complete.
\end{proof}

Let $X$ be a smooth projective variety and $\cL$ be a very ample line bundle inducing the closed embedding $i:X\to\bP^n$, where $n=h^0(X,\cL)-1$. Let $\cI_X$ be the corresponding ideal sheaf on $\bP^n$; for all sufficiently large $d$, we have that $H^1(\bP^n,\cI_X(d))=0$, and that the restriction map
\begin{equation*}
H^0(\bP^n,\cO(d))\to H^0(X,\cL^{\otimes d})
\end{equation*}
is surjective. Let $j:\bP^n\to\bP^N$ be the $d$-th Veronese embedding. In particular, we have
\begin{equation*}
H^0(\bP^N,\cO(1))=H^0(\bP^n,\cO(d)).
\end{equation*}

\begin{lem}[Second Terracini Lemma]
Let $X,\cL,d$ be as above, defining closed embeddings $X\subset\bP^n\subset\bP^N$. Let $p_1,\ldots,p_k\in X$ be general points spanning a $(k-1)$-plane, and suppose that double points at the $p_i$ fail to impose independent conditions on the linear system $|\cL^{\otimes d}|$. Then, there exists a subscheme $C\subset X$, all of whose components are positive-dimensional, passing through the $p_i$ such that for all $p\in C$, we have $T_p X\subset\langle T_{p_1}X,\ldots, T_{p_k}X\rangle$, where the tangent spaces are regarded as linear subspaces of $\bP^N$.
\end{lem}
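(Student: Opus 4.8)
The plan is to reinterpret the failure of independence as the defectivity of a secant variety, to extract $C$ as the contact locus of the corresponding secant tangent space, and to use the symmetry among the $p_i$ to force $C$ through all of them.

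First I would set up the dictionary between double points and tangent spaces. Identify $X$ with its image under the closed embedding $X\subset\bP^n\subset\bP^N$ and write $M=h^0(X,\cL^{\otimes d})-1$ for the dimension of the linear span of $X$ in $\bP^N$. By the choice of $d$, every divisor in $|\cL^{\otimes d}|$ is cut out on $X$ by a hyperplane of $\bP^N$, and such a divisor is singular at a point $p$ precisely when the hyperplane contains the embedded tangent space $T_pX\subset\bP^N$; hence double points at $p_1,\dots,p_k$ impose independent conditions on $|\cL^{\otimes d}|$ if and only if $\Lambda:=\langle T_{p_1}X,\dots,T_{p_k}X\rangle$ has the expected dimension $\min\bigl(M,\,k(\dim X+1)-1\bigr)$. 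Applying the First Terracini Lemma to $X$ inside its linear span $\bP^M$ (legitimate exactly because the $p_i$ span a $(k-1)$-plane), we get $\Lambda=T_z\sigma_k(X)$ for $z$ a general point of $\langle p_1,\dots,p_k\rangle$, so $\dim\Lambda=\dim\sigma_k(X)$. Thus the hypothesis is equivalent to the statement that $\sigma_k(X)$ fails to have its expected dimension; and since $\sigma_k(X)=\overline{\varphi(Y)}$ for the map $\varphi\colon Y\to\sigma_k(X)$ from the proof of the First Terracini Lemma (there $Y$ is a $(k-1)$-plane bundle over $X^k$, so $\dim Y=k\dim X+k-1$), this is in turn equivalent to $\varphi$ having positive-dimensional general fibre, of some pure dimension $e\ge 1$.

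Next I would run the fibre argument. Pick $(p_1,\dots,p_k,z)$ to be a general point of $Y$, so that $z$ is a general point of $\sigma_k(X)$ and $F:=\varphi^{-1}(z)$ is a general fibre, of pure dimension $e$. As $\varphi$ is invariant under the $\mathfrak{S}_k$-action permuting the $k$ points and $z$ is fixed by that action, $F$ is $\mathfrak{S}_k$-stable; and since the last coordinate of every point of $F$ equals the fixed point $z$, the projection $F\to X^k$ is an isomorphism onto a subvariety $F'\subset X^k$, again $\mathfrak{S}_k$-stable and of pure dimension $e\ge 1$. For a general $q=(q_1,\dots,q_k)\in F'$ the $q_i$ span a $(k-1)$-plane and $d\varphi_{(q,z)}$ is surjective, so the tangent-space computation from the proof of the First Terracini Lemma identifies $\operatorname{im}(d\varphi_{(q,z)})$ with $\langle T_{q_1}X,\dots,T_{q_k}X\rangle$ and hence gives $\langle T_{q_1}X,\dots,T_{q_k}X\rangle=T_z\sigma_k(X)=\Lambda$, independently of $q$. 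In particular $T_{q_i}X\subset\Lambda$ for each $i$, so $\pi_i(F')$ lies in the closed subscheme $\Sigma:=\{p\in X:T_pX\subset\Lambda\}$ of $X$ for every $i$.

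Finally I would assemble $C$. By $\mathfrak{S}_k$-symmetry of $F'$ the images $\pi_1(F'),\dots,\pi_k(F')$ coincide, so $\Sigma$ contains all of $p_1,\dots,p_k$; and $\dim\Sigma\ge 1$ because $F'\subset\Sigma^k$. I would then take $C$ to be the union of the positive-dimensional components of $\Sigma$: it satisfies $T_pX\subset\Lambda$ for all $p\in C$ by construction, and the one thing left to check is that $C$ still passes through each $p_i$, i.e.\ that no $p_i$ is isolated in $\Sigma$. If all the $p_i$ were isolated, then applying the equality $\langle T_{q_1}X,\dots,T_{q_k}X\rangle=\Lambda$ to general $q$ in the component of $F'$ through $(p_1,\dots,p_k)$ would force every coordinate projection of that component to be constant, hence the component to be a point --- impossible, as $F'$ has pure dimension $e\ge 1$. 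Since the conditions ``$p_i$ is isolated in $\Sigma$'' are permuted among themselves by the coordinate symmetries of the general (hence interchangeable) tuple $(p_1,\dots,p_k)$, it follows that none of them holds, so every $p_i$ lies on a positive-dimensional component of $\Sigma$, as required. I expect the main obstacle to be precisely this last step --- arranging that $C$ meets \emph{all} of the $p_i$ rather than merely some of them --- for which fixing $z$ and exploiting the resulting $\mathfrak{S}_k$-symmetry is the essential ingredient; the translation into secant defectivity is routine but must be carried out carefully, in particular to confirm that the First Terracini Lemma applies.
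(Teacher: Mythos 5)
Your proposal is correct and follows essentially the same route as the paper: translate the failure of independence into defectivity of $\sigma_k(X)$ via the hyperplane/tangent-space dictionary and the First Terracini Lemma, then extract $C$ from the positive-dimensional fibre of $Y\to\sigma_k(X)$ over a general $z$, using the $S_k$-symmetry to force $C$ through every $p_i$. The only (harmless) difference is that you take $C$ to be the positive-dimensional part of the contact locus $\{p\in X: T_pX\subset\Lambda\}$, whereas the paper takes the closure of the projection of the fibre $\varphi^{-1}(z)$ to a factor of $X^k$; your extra care in checking that no $p_i$ is isolated addresses a point the paper passes over quickly.
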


\begin{proof}
A divisor $D\in|\cL^{\otimes d}|$ is the restriction of a hyperplane $H$ in $\bP^N$, and $D$ is singular at $p\in X$ if and only if $T_{p}X\subset H$. Thus, double points at $p_i$ impose independent conditions for $|\cL^{\otimes d}|$ if and only if the linear span $\langle T_{p_1}X,\ldots, T_{p_k}X\rangle$ has the expected dimension in $\bP^N$. Let $z\in\langle p_1,\ldots,p_k\rangle$ be a general point. Then, by the first Terracini Lemma, we have $T_z\sigma_k(X)=\langle T_{p_1}X,\ldots, T_{p_k}X\rangle$, where the secant variety is taken in $\bP^N$. Thus, if double points at general $p_i$ fail to impose independent conditions on $|\cL^{\otimes d}|$, then $T_z\sigma_k(X)$ fails to have expected dimension for a general point $z\in\sigma_k(X)$. In particular, the secant variety $\sigma_k(X)$ fails to have expected dimension.

Let $Y\subset X^k\times\bP^N$ be the locus of $(x_1,\ldots,x_k,z)$ such that $\langle x_1,\ldots,x_k\rangle$ has dimension $k-1$ and $z\in\langle x_1,\ldots,x_k\rangle$. Then, $\sigma_k(X)$ is the closure of the image of $Y$ upon projection to $\bP^N$, and if $\sigma_k(X)$ fails to have expected dimension, then the general fiber of the projection map $\varphi:Y\to\sigma_k(X)$ has positive dimension. Note that $\varphi$ is invariant under the action of $S_k$ on $X^k$ permuting the factors.

Now, let $(x_1,\ldots,x_k,z)\in Y$ be a general point such that the fiber of $C_z=\varphi^{-1}(z)$ is positive-dimensional. Let $C$ be the closure of the image of $C_z$ upon projection to any of the $k$ factors of $X$. We have $(p_1,\ldots,p_k,z)\in C_z$, so $p_i\in C$ for each $i$. Moreover, if $(q_1,\ldots,q_k,z)\in C_z$, then $\dim\langle q_1,\ldots,q_k\rangle=k-1$, and $T_{q_1}X\subset T_{z}\sigma_k(X)=\langle T_{p_1}X,\ldots, T_{p_k}X\rangle$. The desired property of $C$ follows for $p\in C$ general, and thus for all $p\in C$.
\end{proof}

\begin{cor}
Let $C$ be as above. Then, any divisor in $|\cL^{\otimes d}|$ singular at the $p_i$ is singular along $C$.
\end{cor}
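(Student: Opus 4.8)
The plan is to leverage the Second Terracini Lemma, which already tells us that for every point $p\in C$, the tangent space $T_pX$ (as a linear subspace of $\bP^N$) is contained in the span $\Lambda:=\langle T_{p_1}X,\ldots,T_{p_k}X\rangle$. First I would recall, as in the proof of the Second Terracini Lemma, that a divisor $D\in|\cL^{\otimes d}|$ is cut out by a hyperplane $H\subset\bP^N$ under the composite embedding $X\subset\bP^n\subset\bP^N$, and that $D$ is singular at a point $p\in X$ if and only if $T_pX\subset H$. Now suppose $D$ is singular at all of the $p_i$; then $H$ contains each $T_{p_i}X$, hence contains their span $\Lambda$.

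The key step is then simply combining these two containments: for any $p\in C$ we have $T_pX\subset\Lambda\subset H$, so $D$ is singular at $p$ by the same tangent-space criterion. Since $p\in C$ was arbitrary, $D$ is singular along $C$, which is the assertion. I would phrase this carefully so that ``singular along $C$'' is understood scheme-theoretically in the sense that every closed point of $C$ is a singular point of $D$ (equivalently, of the divisor $D$ viewed as a subscheme of $X$), which is all that the argument and the subsequent use of the corollary require.

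I do not expect any real obstacle here: the corollary is essentially a restatement of the conclusion of the Second Terracini Lemma together with the hyperplane description of $|\cL^{\otimes d}|$. The only point that deserves a word of care is the passage from ``$T_pX\subset\Lambda$ for general $p\in C$'' to ``for all $p\in C$'', but this is already built into the statement of $C$ in the previous lemma (the condition is closed), so it can be invoked directly. I would keep the proof to two or three sentences.

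\begin{proof}
Let $D\in|\cL^{\otimes d}|$ be singular at each $p_i$, and write $D=X\cap H$ for a hyperplane $H\subset\bP^N$ under the composite embedding $X\subset\bP^n\subset\bP^N$. As in the proof of the Second Terracini Lemma, $D$ is singular at a point $p\in X$ precisely when $T_pX\subset H$ (tangent spaces being regarded as linear subspaces of $\bP^N$). Since $D$ is singular at each $p_i$, the hyperplane $H$ contains $\langle T_{p_1}X,\ldots,T_{p_k}X\rangle$. But for every $p\in C$ we have $T_pX\subset\langle T_{p_1}X,\ldots,T_{p_k}X\rangle\subset H$, so $D$ is singular at $p$. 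As $p\in C$ was arbitrary, $D$ is singular along $C$.
\end{proof}
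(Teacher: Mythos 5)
Your proof is correct and is essentially identical to the paper's: both write $D = X\cap H$, note that $H$ contains each $T_{p_i}X$ and hence their span, and conclude via the Second Terracini Lemma that $T_pX\subset H$ for all $p\in C$. No differences worth noting.
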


\begin{proof}
Let $D=H\cap X\in|\cL^{\otimes d}|$, where $H\subset \bP^N$ is a hyperplane. For any $p\in C$, we have $T_p X\subset\langle T_{p_1}X,\ldots, T_{p_k}X\rangle\subset H$, so $D=H\cap X$ is also singular at $p$. 
\end{proof}

\begin{proof}[Proof of Theorem \ref{thm_surfaces}]
Fix a smooth projective surface $X$ and an ample line bundle $\cL$ on $X$. In fact, we may assume $\cL$ is very ample, defining a closed embedding $X\to\bP^N$, and that $d$ is large enough so that $H^1(\bP^N,\mathcal{I}_X(d))=0$.

Suppose that $k$ general double points fail to impose independent conditions on $|\cL^{\otimes d}|$. We assume $d$ is large enough that $h^0(X,\cL^{\otimes d})\ge 5$. Note that we may assume that $k\lesssim h^0(\cL^{\otimes d})/3$, so that a general collection of $k$ points on $X$ spans a $(k-1)$-plane under the embedding defined by $|\cL^{\otimes d}|$. By the Second Terracini Lemma, $k$ general points contain a curve in some linear system $|\cM|$ satisfying the property that $\cL^{\otimes d}\otimes\cM^{\otimes-2}$ is effective; let $S(\cL,d)$ be the set of effective line bundles $\cM$ with this property. 

There are finitely many components of $\Pic(X)$ containing some $\cM\in S(\cL,d)$. Indeed, curves appearing in the linear systems $|\cL|$ have bounded Hilbert polynomial, and thus appear in a bounded family; the same is therefore true for their underlying line bundles. Let $P$ be the union of these components. Then, $P$ is a smooth $\bC$-scheme of dimension $q=H^1(X,\cO_X)$.

Let $H$ be the Chow variety of curves in the linear systems parametrized by $P$, so that there is a forgetful morphism $H\to P$ whose fiber over $[\cM]$ is $|\cM|=\bP H^0(X,\cM)$. Let $\cC\to H$ be the universal curve. We then have a canonical morphism
\begin{equation*}
\cC\times_{H}\cdots\times_H\cC\to\Sym^k(S),
\end{equation*}
which, by assumption, is dominant. Comparing dimensions, we must have
\begin{equation*}
2k\le\dim H+k\le q+\max_{\cM\in S(\cL,d)}(h^0(X,\cM)-1)+k
\end{equation*}
so
\begin{equation*}
k\le q-1+\max_{\cM\in S(\cL,d)}h^0(X,\cM).
\end{equation*}

On the other hand, we may assume that $k$ is as large as possible so that the expected dimension of $|\cL^{\otimes d}-2(p_1+\cdots+p_k)|$ is at least $-1$. Thus, we also have the inequality
\begin{equation*}
k\ge\frac{h^0(X,\cL^{\otimes d})-2}{3}\ge1.
\end{equation*}

Thus, if AH fails for the linear system $(X,\cL^{\otimes d})$, then there exists a line bundle $\cM\in S(\cL,d)$ such that
\begin{equation}\label{main_ineq}
q+h^0(X,\cM)-1\ge\frac{h^0(X,\cL^{\otimes d})-2}{3}.
\end{equation}

Consider the set of curves in the algebraic equivalence class of $\cM$. We may assume these do not have one-dimensional common base locus: indeed, suppose that every member of a linear system in the algebraic equivalence class of $\cM$ contains some fixed integral curve $E$ as a component. Then, by simply removing $E$, a general collection of $k$ points disjoint from $E$ contains a curve in $\cM(-E)$, and thus, we may replace $\cM$ with $\cM(-E)$, preserving all of the needed properties.

\begin{lem}\label{big_and_nef}
The line bundle $\cM\otimes\cL$ is big and nef.
\end{lem}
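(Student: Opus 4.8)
The plan is to check nefness and bigness of $D:=\cM\otimes\cL$ separately, treating $\cL$ merely as ample (which suffices) and using the reduction just made: the curves in the algebraic equivalence class of $\cM$ have no common one-dimensional base locus, i.e.\ no integral curve is a component of every such curve.

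For nefness, let $C\subset X$ be an arbitrary irreducible curve; I want $D\cdot C\ge 0$. Since $\cL$ is ample, $\cL\cdot C>0$, so it is enough to show $\cM\cdot C\ge 0$. By the no-fixed-curve reduction, there is a line bundle $\cM'$ numerically equivalent to $\cM$ and an effective divisor $M\in|\cM'|$ not containing $C$ as a component. Then $M\cap C$ is a proper intersection, so $\cM\cdot C=\cM'\cdot C=M\cdot C\ge 0$, and hence $D\cdot C=\cM\cdot C+\cL\cdot C>0$. As this holds for every irreducible curve, $D$ is nef.

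For bigness, I would invoke the fact that on a smooth projective surface a nef divisor is big if and only if it has positive self-intersection. Expanding, $D^2=\cM\cdot D+\cL\cdot D$. Since $D$ is nef and $\cM$ is effective, writing $\cM$ as a nonnegative sum of irreducible components gives $\cM\cdot D\ge 0$. Since $\cL$ is ample and $\cM$ is effective, $\cL\cdot\cM\ge 0$, so $\cL\cdot D=\cL\cdot\cM+\cL^2\ge\cL^2>0$. Therefore $D^2>0$ and $D$ is big.

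I do not expect a real obstacle here: the only substantive ingredient, namely the reduction to the case in which the algebraic equivalence class of $\cM$ carries no fixed curve, has already been made, and the remainder is the Hodge index / Nakai--Moishezon package on surfaces. The one point that must be handled with care is that what is needed is the absence of a common one-dimensional base locus of the \emph{entire} algebraic family of $\cM$, rather than basepoint-freeness of a single linear system $|\cM|$, so that every irreducible curve $C$ is avoided by some effective divisor numerically equivalent to $\cM$.
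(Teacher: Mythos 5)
Your proof is correct. The nefness half is essentially identical to the paper's argument: the paper phrases it contrapositively (if $\cM\cdot E<0$ for an integral curve $E$, then $E$ would lie in the common base locus of the algebraic equivalence class of $\cM$, contradicting the reduction), while you phrase it directly by producing an effective divisor algebraically equivalent to $M$ that meets $C$ properly; these are the same observation, and you are right to emphasize that the reduction must be applied to the whole algebraic equivalence class rather than to a single linear system. For bigness you take a slightly longer route than the paper: you first establish nefness and then invoke the criterion that a nef divisor on a surface with positive self-intersection is big, computing $D^2\ge\cL^2>0$. The paper instead declares bigness ``immediate, because $\cM$ is effective and $\cL$ is ample,'' i.e.\ the sum of an effective divisor and a big (here ample) divisor is big, which needs no nefness and no intersection computation. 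Both arguments are valid; the paper's is marginally more elementary and independent of the nefness claim, whereas yours has the small advantage of making the positivity of $D^2$ explicit, a fact that is in the same spirit as the Hodge index estimates used immediately afterwards.
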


\begin{proof}
The bigness is immediate, because $\cM$ is effective and $\cL$ is ample. Now, suppose that $\cM\cdot E<0$ for some integral effective curve $E$. Then, $E$ must be contained in the common base locus of the algebraic equivalence class of $\cM$, which we have assumed to be zero-dimensional. Thus, $\cM$ is nef, and $\cM\otimes\cL$ is nef as well.
\end{proof}

By Kawamata-Viehweg vanishing \cite{kawamata,viehweg}, we have $h^1(X,(\cM\otimes \cL)\otimes K_X)=0$. We may write $\cL\otimes K_X\cong\cO_X(A_2-A_1)$, where $A_1,A_2$ are fixed very ample divisors; we may additionally assume $A_1,A_2$ are smooth and connected. Then, from the exact sequence
\begin{equation*}
0= H^1(X,\cM(A_2-A_1))\to H^1(X,\cM(A_2))\to H^1(A_1,\cM(A_2)|_{A_1}),
\end{equation*}
we find that $h^1(\cM(A_2))$ is bounded above by a constant. Therefore,
\begin{equation*}
h^0(\cM(A_2))\le \chi(\cM(A_2))+O(1).
\end{equation*}
Let $M$ be a divisor with underlying line bundle $\cM$. We then have, by Riemann-Roch,
\begin{align*}
\chi(\cM(A_2))-\chi(\cM)&=\frac{1}{2}[(M+A_2)(M+A_2-K_X)-M(M-K_X)]\\
&=\frac{1}{2}[A_2^2+A_2(2M-K_X)]\\
&=\gamma'+A_2\cdot M,
\end{align*}
where $\gamma'$ is a constant. On the other hand, because $A_2$ is ample and $dL-2M$ is effective, where $L$ is the divisor class of $\cL$, we have
\begin{equation*}
M\cdot A_2<\frac{d}{2}(L\cdot A_2),
\end{equation*}
so combining with the above, we get
\begin{equation*}
\chi(\cM(A_2))-\chi(\cM)\le O(d).
\end{equation*}

We now estimate $\chi(\cM)$. By Riemann-Roch,
\begin{equation*}
\chi(\cM)=\chi(\cO_X)+\frac{1}{2}M(M-K_X).
\end{equation*}
By the Hodge Index Theorem, we have
\begin{equation*}
M^2<\frac{(L\cdot M)^2}{L^2}<\frac{\left(\frac{d}{2}L^2\right)^2}{L^2}=\frac{L^2}{4}d^2
\end{equation*}

Furthermore,
\begin{align*}
M(-K_X)&<M\cdot A_1+M\cdot L\\
&\le O(d),
\end{align*}
so $\chi(\cM)\le\frac{d^2}{8}L^2+O(d)$. Combining, we have
\begin{align*}
h^0(\cM)&\le h^0(\cM(A_2))\\
&\le\chi(\cM(A_2))+O(1)\\
&\le \chi(\cM)+O(d)\\
&\le\frac{L^2}{8}d^2+O(d).
\end{align*}
Because $h^0(X,\cL^{\otimes d})=\frac{L^2}{2}d^2+O(d)$, (\ref{main_ineq}) must fail for all $d$ sufficiently large. Therefore, for such $d$, AH holds.

\end{proof}

\section{Higher dimensions}\label{higher_dim_section}

The usual strategy for proving the Alexander-Hirschowitz theorem for $\bP^n$, as originally implemented in full in \cite{ah} and simpified in \cite{bo}, involves degenerating the $k$ points so that some of them lie on a fixed hyperplane, and reducing to the cases of degree $d$ hypersurfaces in $\bP^n$ and degree $(d-1)$ hypersurfaces in $\bP^n$. While the degeneration argument seems likely to work in principle for an arbitrary ambient variety, one would need a way to establish a base case for one value of $d$ and a fixed $X$. This is done by explicit calculation for cubics on $\bP^n$, but such a direct approach is unavailable for general $X$.

Instead, one can attempt to extend the strategy for surfaces to the higher-dimensional case. By the Terracini Lemmas, a linear system for which AH fails will still have a multiple base curve $C$, and varying the $k$ points gives a family of such $C$. Therefore, there would exist a family of curves passing through $k$ general points on $X$, leading to the question:
\begin{question}\label{question_hilbert}
Suppose that $k$ general points on $X$ lie on a curve parametrized by some component of the Hilbert scheme of degree $e$ curves in $X$. What is the smallest possible value of $e$?
\end{question}

On the other hand, $C$ would have to appear in the singular locus of a hyperplane section of degree $d$ on $X$. so one also arrives at the second question:
\begin{question}\label{question_singular_locus}
Suppose that a $d$-plane section of $X$ is singular along a curve of degree $e$. What is the largest possible value of $e$?
\end{question}

Even in the case of $\bP^3$, however, the sharpest possible independent bounds for Questions \ref{question_hilbert} and \ref{question_singular_locus} do not seem yield a contradiction. Thus, more refined information about $C$ would presumably need to be taken into account.

%
%
%
%

\end{document}